\documentclass[12pt,a4paper]{article}
\usepackage[T1]{fontenc}
\usepackage[utf8]{inputenc}
\usepackage{lmodern}
\usepackage{amsmath,amssymb,amsthm,mathtools}
\usepackage{geometry}
\usepackage{enumitem}
\usepackage{xcolor}
\usepackage{hyperref}
\usepackage{microtype}
\usepackage{booktabs}
\hypersetup{colorlinks=true,linkcolor=blue!45!black,citecolor=blue!45!black,urlcolor=blue!45!black}
\allowdisplaybreaks[2]
\theoremstyle{plain}
\newtheorem{theorem}{Theorem}[section]
\newtheorem{lemma}{Lemma}
\theoremstyle{remark}
\newtheorem{remark}{Remark}

\newcommand{\Rd}{\mathbb R^d}
\newcommand{\Rn}{\mathbb R^n}
\newcommand{\Sn}{S^{n-1}}
\newcommand{\calM}{\mathcal M}
\newcommand{\calG}{\mathcal G}
\newcommand{\norm}[2][]{\left\|#2\right\|_{#1}}

\newcommand{\eps}{\varepsilon}

\title{\bfseries A Density-Distance Version of the Carlen--Frank--Lieb Stability Theorem}
\author{Gangsong Leng\\[-0.2em]
\small East China Normal University; Shanghai University\\[-0.2em]
\small \texttt{lenggangsong@163.com}}
\date{}

\begin{document}
\maketitle

\begin{abstract}
Carlen, Frank and Lieb studied stability estimates for the lowest eigenvalue of a Schr\"odinger operator by decomposing the problem into a stability estimate for H\"older's inequality and a stability estimate for a Gagliardo--Nirenberg--Sobolev inequality. In this note we point out that, if the H\"older step is replaced by the optimal $L^1$-stability theorem of Leng and Lu in probabilistic form, then one obtains a density-distance version of the Carlen--Frank--Lieb stability theorem. The new formulation measures the $L^1$ distance between the normalized density $V_-^s/\int V_-^s$ induced by the negative part of the potential and the corresponding density induced by an optimal potential, where $s=\gamma+d/2$. As a geometric application of the same idea, we also derive a density-stability version of the $L_p$ mixed volume inequality. In the case where one of the two convex bodies is centrally symmetric and both bodies are trapped between two concentric Euclidean balls, this gives an averaged stability estimate for the non-evenness of the support function.
\end{abstract}

\noindent\textbf{Keywords:} H\"older's inequality; stability; Schr\"odinger operator; Gagliardo--Nirenberg--Sobolev inequality; $L_p$ mixed volume; cone-volume measure

\noindent\textbf{2020 Mathematics Subject Classification.} 35J10, 26D15, 52A39, 52A40

\section{Introduction}

Stability problems ask whether, when an inequality is almost sharp, the objects involved must be quantitatively close to the equality cases. In \cite{CFL}, Carlen, Frank and Lieb studied stability estimates for the lowest eigenvalue of the Schr\"odinger operator
\[
        -\Delta+V
\]
in $L^2(\Rd)$. Let
\[
\lambda(V)=\inf\left\{\int_{\Rd}\bigl(|\nabla \psi|^2+V|\psi|^2\bigr)\,dx:
        \psi\in H^1(\Rd),\ \norm[2]{\psi}=1\right\}.
\]
Put
\[
        s=\gamma+\frac d2,
        \qquad
        \frac1s+\frac2Q=1,
        \quad \text{that is,}\quad Q=\frac{2s}{s-1}.
\]
Under the assumptions of Carlen--Frank--Lieb, namely $\gamma>1/2$ for $d=1$ and $\gamma>0$ for $d\ge2$, define the Keller-type sharp constant
\[
C_{\gamma,d}:=\sup_V
\frac{|\lambda(V)|}{\left(\int_{\Rd}V_-^s\,dx\right)^{1/\gamma}}.
\]
Carlen, Frank and Lieb first proved that the family of optimal potentials exists and is unique up to translations and dilations. We denote this family by
\[
\calM=
\left\{W\in L^s(\Rd):
 |\lambda(W)|=C_{\gamma,d}\left(\int_{\Rd}W_-^s\,dx\right)^{1/\gamma}
\right\}.
\]
Their main stability theorem may be summarized as follows.

\begin{theorem}[Carlen--Frank--Lieb stability theorem]\label{thm:CFL}
Let $\gamma>1/2$ if $d=1$, and let $\gamma>0$ if $d\ge2$. Put $s=\gamma+d/2$. Then there is a constant $c_{\gamma,d}>0$ such that, for every $V\in L^s(\Rd)$ with $V_-\not\equiv0$, the following stability estimates hold.

If $s\le2$, then
\[
\frac{|\lambda(V)|}{\left(\int_{\Rd}V_-^s\,dx\right)^{1/\gamma}}
\le
C_{\gamma,d}
\left(
1-c_{\gamma,d}\inf_{W\in\calM}
\frac{\norm[s]{V_- -W_-}^2}{\norm[s]{V_-}^2}
\right).
\tag{1.1}
\]
If $s\ge2$ and $s'=s/(s-1)$, then
\[
\frac{|\lambda(V)|}{\left(\int_{\Rd}V_-^s\,dx\right)^{1/\gamma}}
\le
C_{\gamma,d}
\left(
1-c_{\gamma,d}\inf_{W\in\calM}
\frac{\norm[s']{V_-^{s-1}-W_-^{s-1}}^2}{\norm[s']{V_-^{s-1}}^2}
\right).
\tag{1.2}
\]
Here one may restrict the infimum to optimal potentials $W$ with the same normalization scale as $V$.
\end{theorem}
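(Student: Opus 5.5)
The plan follows the Carlen--Frank--Lieb decomposition. The variational problem splits into a H\"older step and a Gagliardo--Nirenberg--Sobolev step, and each step gets its own stability estimate.

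\textbf{Reduction.} Fix $V$ with $V_-\not\equiv0$ and a normalized $\psi$. Since $V\ge -V_-$,
\[
\int_{\Rd}\bigl(|\nabla\psi|^2+V|\psi|^2\bigr)\,dx\ \ge\ \int_{\Rd}|\nabla\psi|^2\,dx-\int_{\Rd} V_-|\psi|^2\,dx .
\]
By H\"older with exponents $s$ and $s'=Q/2$,
\[
\int_{\Rd} V_-|\psi|^2\,dx\le \norm[s]{V_-}\,\norm[Q]{\psi}^2 .
\]
Optimizing the GNS inequality $\norm[Q]{\psi}^2\le K\norm[2]{\nabla\psi}^{2\theta}\norm[2]{\psi}^{2(1-\theta)}$ over the scale then gives $|\lambda(V)|\le C_{\gamma,d}\norm[s]{V_-}^{s/\gamma}$. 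Equality forces two things: $\psi$ is a GNS optimizer $g$, and $V_-$ is proportional to $g^{2/(s-1)}$, i.e.\ $V_-^{s}\propto g^{Q}$. This also identifies $\calM$, where $-W$ is a multiple of $g^{Q-2}$.

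\textbf{Two deficits.} Let $\psi$ be a minimizer for $V$; existence follows from concentration-compactness when $\lambda(V)<0$, and otherwise we take an approximate minimizer. The near-equality then yields two deficits:
\[
\delta_H=1-\frac{\int_{\Rd} V_-\psi^2\,dx}{\norm[s]{V_-}\norm[Q]{\psi}^2},
\qquad
\delta_G=\text{the GNS deficit of }\psi .
\]
A short computation shows
\[
1-\frac{|\lambda(V)|}{C_{\gamma,d}\norm[s]{V_-}^{s/\gamma}}\ \gtrsim\ \delta_H+\delta_G .
\]
\textbf{GNS step.} The GNS stability result (Carlen--Figalli for the relevant range, or the Bianchi--Egnell-type result used by CFL) gives a $g$ in the optimizer family with
\[
\norm[Q]{\psi-g}^2\lesssim \delta_G\,\norm[Q]{\psi}^2 .
\]
\textbf{H\"older step.} Quantitative H\"older gives closeness of $V_-/\norm[s]{V_-}$ to $\bigl(\psi^{2}/\norm[Q]{\psi}^2\bigr)^{Q/2-1}$ in $L^s$, of size $\sqrt{\delta_H}$ when $s\le2$. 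When $s\ge2$ the uniformly convex side is the dual one, so we instead get closeness of $V_-^{s-1}$ to $\psi^{2}$-powers in $L^{s'}$; this is why the two cases (1.1) and (1.2) appear.

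\textbf{Combination and main obstacle.} We set $W_-=c\,g^{Q-2}$ with $c$ matching the scale of $V$. The triangle inequality then bounds $\norm[s]{V_--W_-}$ by the H\"older error plus $\norm[s]{\psi^{Q-2}-g^{Q-2}}$ (suitably normalized). The last term must be controlled by $\norm[Q]{\psi-g}$. I expect this to be the main obstacle. When $Q-2\le1$ (i.e.\ $s\ge2$) the map $t\mapsto t^{Q-2}$ is H\"older continuous, and the power estimate is cleanest in the dual variables, which matches (1.2). When $Q-2>1$ we use
\[
|a^{p}-b^{p}|\le p\,|a-b|\,(a+b)^{p-1},
\]
followed by H\"older with exponents $Q/(Q-2)\cdot$, which gives control by $\norm[Q]{\psi-g}\,\norm[Q]{\psi+g}^{Q-3}$. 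The triangle step is lossless in the exponent, so squaring yields the $\norm{\cdot}^2$ form. Care is needed that $\psi$ is a minimizer of $V$ itself, not merely near-optimal. If no minimizer exists, we take a minimizing sequence and absorb the error into the deficit. Finally, both sides are invariant under translations and dilations, so the infimum may be restricted to $W$ with the same normalization as $V$.
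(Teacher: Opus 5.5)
The paper does not reprove this theorem; it quotes it from Carlen--Frank--Lieb. Your outline has the same H\"older-plus-GNS architecture, and your H\"older step via $2$-uniform convexity of $L^s$ ($s\le2$) or of $L^{s'}$ ($s\ge2$) is sound. The gap is in how you split the deficit.

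Your $\delta_H$ and $\delta_G$ are ratio deficits. All you extract from them is that $V_-/\|V_-\|_s$ is close to $g^{Q-2}/\|g\|_Q^{Q-2}$ for \emph{some} GNS optimizer $g$, and the width of that $g$ has no relation to $\|V_-\|_s$. But $\calM$ is not a cone. The Keller ratio is invariant under $V\mapsto\mu^2V(\mu(\cdot-x_0))$ but not under $V\mapsto cV$, so for $W\in\calM$ the value of $\|W_-\|_s$ fixes the width of $W_-$. Your potential $W_-=c\,g^{Q-2}$ takes its width from $g$ and its amplitude from $\|W_-\|_s=\|V_-\|_s$, so it is in general not in $\calM$.

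A concrete test: let $W_0=-\beta g^{Q-2}\in\calM$ have ground state $g$, and let $c\ne1$. Then $cW_0\notin\calM$. Equality would force the ground state of $cW_0$ to be a multiple of $g$, and subtracting the two eigenvalue equations would give $(1-c)\beta g^{Q-2}\equiv\mathrm{const}$, which is impossible. So $cW_0$ has positive deficit and distance to $\calM$ of order $|c-1|$, yet it lies exactly on the cone $\{c'g^{Q-2}\}$. An argument whose output is closeness to that cone cannot give (1.1) or (1.2). Your equality analysis shows the same omission: equality also forces $\|\nabla\psi\|_2^2=t^*$, where $t^*$ maximizes $\tau\mapsto K\|V_-\|_s\tau^\theta-\tau$.

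That third condition is the missing ingredient, and your ``short computation'' throws it away. Near equality the deficit actually satisfies $1-|\lambda(V)|/(C_{\gamma,d}\|V_-\|_s^{s/\gamma})\gtrsim\delta_H+\delta_G+(t/t^*-1)^2$ with $t=\|\nabla\psi\|_2^2$. Here $t^*$ is exactly the kinetic energy of the ground state of the element of $\calM$ whose $L^s$ norm is $\|V_-\|_s$. To transfer this to $g$ you need GNS stability that also controls $\|\nabla g\|_2$ and $\|g\|_2$, such as the $H^1$ estimate (2.4). The $L^Q$ form you quote does not by itself tie the width of $g$ to $t$.

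The cleaner route, which Carlen--Frank--Lieb take and the paper reuses in Lemma~\ref{lem:combine} and the proof of Theorem~\ref{thm:main}, goes as follows.
\begin{itemize}
\item Normalize $\int V_-^s=1$.
\item For the ground state $\psi$, write $C_{\gamma,d}-|\lambda(V)|\ge(E[\psi]+C_{\gamma,d})+H[\psi,V_-]$, using the non-dilation-invariant functional $E[\psi]=\|\nabla\psi\|_2^2-\|\psi\|_Q^2$.
\item The minimizers $\calG$ of $E$ at $\|\psi\|_2=1$ sit at a single scale, and $\phi\mapsto|\phi|^{Q-2}/\|\phi\|_Q^{Q-2}$ maps $\calG$ onto the normalized elements of $\calM$, so no scale matching is needed.
\end{itemize}

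Three smaller points.
\begin{itemize}
\item The power estimate you call the main obstacle is routine.
\item Your threshold is off: $Q-2=2/(s-1)\le1$ means $s\ge3$, not $s\ge2$. In case (1.2) no H\"older continuity is needed at all, since $(Q-2)(s-1)=2$ gives $W_-^{s-1}\propto g^2$ and $\|\psi^2-g^2\|_{Q/2}\le\|\psi-g\|_Q\|\psi+g\|_Q$.
\item Carlen--Figalli's GNS stability concerns a del Pino--Dolbeault-type inequality, not this $L^2$--$L^Q$ one. The input you need is Carlen--Frank--Lieb's own Bianchi--Egnell-type estimate.
\end{itemize}
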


The proof in \cite{CFL} has a transparent structure. The lowest eigenvalue problem is first reduced to a two-variable variational problem, and the deficit is then decomposed into a deficit coming from H\"older's inequality and a deficit coming from a Gagliardo--Nirenberg--Sobolev inequality. There are several natural stability forms for H\"older's inequality. The version of Aldaz is related to a Hellinger-type distance, while Carlen--Frank--Lieb use a stability estimate expressed through the duality map in $L^p$. In the present note we use the optimal $L^1$-stability theorem of Leng and Lu in probabilistic form. This leads to the following density-distance version of the Carlen--Frank--Lieb theorem.

For $V_-\not\equiv0$, define the probability density induced by the negative part of $V$ by
\[
        P_V(x)=\frac{V_-(x)^s}{\int_{\Rd}V_-^s\,dx}.
\]
Our first main result is the following.

\begin{theorem}[Density-distance version of the CFL stability theorem]\label{thm:main}
Let $\gamma>1/2$ if $d=1$, and let $\gamma>0$ if $d\ge2$. Put $s=\gamma+d/2$. Then there is a constant $c_{\gamma,d}>0$ such that, for every $V\in L^s(\Rd)$ with $V_-\not\equiv0$,
\[
\frac{|\lambda(V)|}{\left(\int_{\Rd}V_-^s\,dx\right)^{1/\gamma}}
\le
C_{\gamma,d}
\left[
1-c_{\gamma,d}\inf_{W\in\calM}
\left(\int_{\Rd}|P_V-P_W|\,dx\right)^2
\right].        \tag{1.3}
\]
\end{theorem}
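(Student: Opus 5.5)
Following the Carlen--Frank--Lieb scheme, I would first rewrite the eigenvalue problem as a two-variable problem. For $\psi\in H^1$ with $\norm[2]{\psi}=1$, Hölder's inequality gives
\[
\int V_-|\psi|^2\,dx\le \norm[s]{V_-}\,\norm[Q/2]{\psi}^2 ,
\qquad \frac1s+\frac2Q=1 .
\]
Hence
\[
|\lambda(V)|=\sup_\psi\Bigl(\int V_-|\psi|^2-\int|\nabla\psi|^2-\int V_+|\psi|^2\Bigr)
\le \sup_\psi\Bigl(\norm[s]{V_-}\norm[Q/2]{\psi}^2-\norm[2]{\nabla\psi}^2\Bigr).
\]
The right-hand side is computed via the Gagliardo--Nirenberg--Sobolev inequality $\norm[Q]{\psi}\le K\norm[2]{\nabla\psi}^{\theta}\norm[2]{\psi}^{1-\theta}$. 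Optimizing over the scale gives exactly $C_{\gamma,d}\norm[s]{V_-}^{s/\gamma}$, and equality holds precisely when $V_-=c\,|\psi|^{Q-2}$ with $\psi$ a GNS optimizer. So $\calM$ consists of the $W$ with $W_-^s\propto Q_\ast^{Q}$, where $Q_\ast$ is the GNS optimizer, up to translation and dilation. I would isolate the Hölder deficit
\[
\delta_H(\psi):=1-\frac{\int V_-|\psi|^2}{\norm[s]{V_-}\norm[Q/2]{\psi}^2},
\]
and take the GNS deficit $\delta_G$ as in CFL.

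Next I would fix a near-optimal ground state $\psi$, meaning either a true minimizer or one within $\eps$ of the infimum. The standard bookkeeping then gives
\[
\frac{|\lambda(V)|}{\norm[s]{V_-}^{s/\gamma}}\le C_{\gamma,d}\bigl(1-c\,\delta_H-c\,\delta_G\bigr).
\]
For the Hölder part I invoke the Leng--Lu optimal $L^1$-stability theorem in probabilistic form. Write the two factors as $f=V_-$ and $g=|\psi|^2$, with normalized densities $P=f^s/\int f^s=P_V$ and $R=g^{s'}/\int g^{s'}=|\psi|^Q/\int|\psi|^Q$. The theorem gives
\[
\delta_H\ge c_s\Bigl(\int|P_V-R|\,dx\Bigr)^2 .
\]
For the GNS part I use the Carlen--Frank--Lieb stability result for GNS, or its Bianchi--Egnell type version. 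This yields a GNS optimizer $\phi$, normalized in the appropriate scale, with $\delta_G\gtrsim \mathrm{dist}(\psi,\phi)^2$. I then convert this distance into
\[
\int\Bigl|\frac{|\psi|^Q}{\int|\psi|^Q}-\frac{|\phi|^Q}{\int|\phi|^Q}\Bigr|\,dx\lesssim \delta_G^{1/2}.
\]
The elementary bound $\bigl||a|^Q-|b|^Q\bigr|\le Q|a-b|(|a|+|b|)^{Q-1}$ together with Hölder's inequality turns an $L^Q$ closeness of $\psi$ and $\phi$ into $L^1$ closeness of the normalized $Q$-th powers. The $L^Q$ closeness in turn comes from the $H^1$ closeness via Sobolev. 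Now set $W=-\phi^{Q-2}$, so that $P_W=|\phi|^Q/\int|\phi|^Q$ and $W\in\calM$. The triangle inequality then gives
\[
\int|P_V-P_W|\le \int|P_V-R|+\int|R-P_W|\lesssim\delta_H^{1/2}+\delta_G^{1/2}.
\]
Squaring yields $\bigl(\int|P_V-P_W|\bigr)^2\lesssim\delta_H+\delta_G$, which gives (1.3).

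I expect the main obstacle to be the GNS step and the lack of a minimizer. The ground state may not exist, for instance when $V_-$ is spread out, and the GNS stability is only qualitative or local unless one uses the CFL quantitative version. I would handle this by working with an $\eps$-minimizer and letting $\eps\to0$. I would also treat the regime where the deficit is large separately. There the bound is trivial, because $\int|P_V-P_W|\le2$, so choosing $c_{\gamma,d}$ small enough keeps the right-hand side of (1.3) at least as large as $C_{\gamma,d}$ times a positive constant that dominates the left-hand side whenever the total deficit exceeds a fixed threshold. Only the small-deficit regime then needs the local quantitative estimates above.
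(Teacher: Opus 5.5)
Your proposal is correct and follows essentially the same route as the paper. It applies Leng--Lu to $P_V$ and $|\psi|^Q/\norm[Q]{\psi}^Q$ for the H\"older deficit, and converts the CFL $H^1$ stability for GNS into an $L^1$ density bound via Sobolev and $\bigl||a|^Q-|b|^Q\bigr|\le Q|a-b|(|a|+|b|)^{Q-1}$. It then combines the two bounds by the triangle inequality and uses the trivial bound $\int|P_V-P_W|\le 2$ in the large-deficit regime. The paper instead proves the combined inequality for every $\psi$ and then takes the infimum, which is equivalent to your $\eps$-minimizer argument.

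There are two harmless slips:
\begin{itemize}
\item The H\"older factor should be $\norm[Q]{\psi}^2=\norm[Q/2]{|\psi|^2}$, not $\norm[Q/2]{\psi}^2$.
\item The optimal potential attached to $\phi$ is a suitable positive multiple of $-|\phi|^{Q-2}$, not $-\phi^{Q-2}$ itself; at unit normalization it is $-|\phi|^{Q-2}/\norm[Q]{\phi}^{Q-2}$, as in the paper. This does not change $P_W$.
\end{itemize}
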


The main difference between Theorem \ref{thm:main} and Theorem \ref{thm:CFL} is the choice of distance. The original CFL theorem measures the distance from $V_-$, or a power of $V_-$, to the optimal family in suitable $L^p$-type norms. Theorem \ref{thm:main} measures instead the $L^1$ distance between the probability densities induced by the negative parts of the potentials. Thus the two theorems have parallel roles in spectral theory: the original result gives stability in a function-norm sense, while Theorem \ref{thm:main} gives stability in a mass-distribution sense. The latter formulation is especially well suited to links with surface area measures, cone-volume measures and densities arising from support functions in geometric analysis.

As an application of this density point of view, we next give a density-stability form of the $L_p$ mixed volume inequality. In the centrally symmetric case, under a uniform concentric ball condition, it further yields an averaged estimate for the non-evenness of the support function. We use standard notation from convex geometry; see Schneider's monograph \cite{Schneider}. The basic theory of $L_p$ mixed volumes goes back to Lutwak's $L_p$ Brunn--Minkowski--Firey theory \cite{Lutwak}. Let $K,L$ be convex bodies in $\Rn$ containing the origin in their interiors. Let $h_K,h_L$ be their support functions and let $S_K$ be the surface area measure of $K$. For $p>1$, define the $L_p$ mixed volume by
\[
        V_p(K,L)=\frac1n\int_{\Sn} h_L(u)^p h_K(u)^{1-p}\,dS_K(u).
\]
We also define the normalized cone-volume measure of $K$ by
\[
        d\mu_K(u)=\frac{h_K(u)}{n|K|}\,dS_K(u).
\]
If $|K|=|L|$, put
\[
        \delta_p(K,L)=\frac{V_p(K,L)}{|K|}-1.
\]
By the $L_p$ Minkowski inequality, $\delta_p(K,L)\ge0$.

\begin{theorem}[Density stability for $L_p$ mixed volumes and a centrally symmetric consequence]\label{thm:geom}
Let $K,L\subset\Rn$ be convex bodies containing the origin in their interiors. Let $p>1$, let $q=p/(p-1)$, and assume that $|K|=|L|$. Put
\[
        r(u)=\frac{h_L(u)}{h_K(u)}.
\]
Then
\[
\int_{\Sn}
\left|
\frac{r(u)^p}{1+\delta_p(K,L)}-1
\right|\,d\mu_K(u)
\le
\sqrt{2q\,\delta_p(K,L)}.               \tag{1.4}
\]
If, in addition, $K=-K$, then
\[
\int_{\Sn}
\left|
\left(\frac{h_L(u)}{h_K(u)}\right)^p
-
\left(\frac{h_L(-u)}{h_K(u)}\right)^p
\right|\,d\mu_K(u)
\le
2(1+\delta_p(K,L))\sqrt{2q\,\delta_p(K,L)}.    \tag{1.5}
\]
If, moreover, there are constants $0<m\le M<\infty$ such that
\[
        mB_2^n\subset K,L\subset MB_2^n,
\]
then
\[
\int_{\Sn}|h_L(u)-h_L(-u)|\,d\mu_K(u)
\le
\frac{2M(1+\delta_p(K,L))}{p}
\left(\frac{M}{m}\right)^{p-1}
\sqrt{2q\,\delta_p(K,L)}.              \tag{1.6}
\]
In particular, if
\[
        L_0=\frac12(L+(-L)),
\]
then $L_0$ is centrally symmetric and
\[
\int_{\Sn}|h_L(u)-h_{L_0}(u)|\,d\mu_K(u)
\le
\frac{M(1+\delta_p(K,L))}{p}
\left(\frac{M}{m}\right)^{p-1}
\sqrt{2q\,\delta_p(K,L)}.              \tag{1.7}
\]
\end{theorem}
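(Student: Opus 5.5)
The plan is to apply a Pinsker-type inequality to the probability measure $\mu_K$, using the two relations that the $L_1$ and $L_p$ Minkowski inequalities provide. Then (1.5)–(1.7) follow from (1.4) by symmetry and elementary pointwise estimates.

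\emph{Step 1 (normalization).} By definition $\int_{\Sn} d\mu_K=1$, so $\mu_K$ is a probability measure on $\Sn$. Since $V_p(K,L)=|K|\int r^p\,d\mu_K$, we have
\[
\int_{\Sn} r^p\,d\mu_K=1+\delta_p(K,L).
\]
Put $\delta=\delta_p(K,L)$ and $f=r^p/(1+\delta)$. Then $f$ is a probability density with respect to $\mu_K$, and the left side of (1.4) is exactly $\int|f-1|\,d\mu_K$.

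\emph{Step 2 (the two constraints).} The first mixed volume satisfies $V_1(K,L)=\frac1n\int h_L\,dS_K=|K|\int r\,d\mu_K$. The classical Minkowski inequality together with $|K|=|L|$ gives $\int r\,d\mu_K\ge1$. Writing $r=(1+\delta)^{1/p}f^{1/p}$, this becomes
\[
-\log\int f^{1/p}\,d\mu_K\le \tfrac1p\log(1+\delta)\le \tfrac{\delta}{p}.
\]

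\emph{Step 3 (Pinsker step).} This is the step I expect to carry the weight; it is where the Leng--Lu $L^1$-stability of H\"older's inequality in probabilistic form enters. Set $t=1/p\in(0,1)$. The quantity $-\log\int f^t\,d\mu_K$ equals $(1-t)D_t(f\mu_K\|\mu_K)$, where $D_t$ is the R\'enyi divergence of order $t$. I would use the Pinsker-type bound for R\'enyi divergences of order $t\in(0,1)$ (Gilardoni):
\[
D_t\ge \tfrac t2\Bigl(\int|f-1|\,d\mu_K\Bigr)^2 .
\]
Equivalently, one can use the sharp $L^1$-H\"older stability $\int f^t\le \exp\bigl(-\tfrac{t(1-t)}2\|f-1\|_1^2\bigr)$, which I take to be the content of the Leng--Lu theorem. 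Combining with Step 2 gives
\[
\frac{1}{2pq}\|f-1\|_{L^1(\mu_K)}^2\le \frac{\delta}{p},
\]
that is, $\|f-1\|_1\le\sqrt{2q\delta}$, which is (1.4). If the needed Pinsker constant is not available as a black box, I would prove it by reducing to two-point distributions via the data-processing inequality, splitting according to whether $f\ge1$ or $f<1$, and then verifying the resulting one-variable inequality.

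\emph{Step 4 (consequences).} Suppose $K=-K$. Then $h_K(-u)=h_K(u)$ and $\mu_K$ is invariant under $u\mapsto-u$. The second term in (1.5) is therefore $r(-u)^p$. By the triangle inequality through the constant $1+\delta$,
\[
\int|r(u)^p-r(-u)^p|\,d\mu_K\le \int|r(u)^p-(1+\delta)|\,d\mu_K+\int|r(-u)^p-(1+\delta)|\,d\mu_K .
\]
The change of variables $u\mapsto -u$ shows the two terms on the right are equal. Each equals $(1+\delta)$ times the left side of (1.4), which gives (1.5).

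Under the ball condition we have $r\ge m/M$ everywhere. By the mean value theorem, $|a-b|\le \frac1p\xi^{1-p}|a^p-b^p|$ for some $\xi$ between $a$ and $b$, so $\xi\ge m/M$ and $\xi^{1-p}\le (M/m)^{p-1}$. Hence
\[
|r(u)-r(-u)|\le \tfrac1p(M/m)^{p-1}|r(u)^p-r(-u)^p| .
\]
Since $h_L(u)-h_L(-u)=h_K(u)\bigl(r(u)-r(-u)\bigr)$ and $h_K\le M$, integrating against $\mu_K$ and applying (1.5) yields (1.6).

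Finally, $h_{L_0}(u)=\tfrac12\bigl(h_L(u)+h_L(-u)\bigr)$ is even, so $L_0$ is centrally symmetric. Moreover $h_L-h_{L_0}=\tfrac12\bigl(h_L(u)-h_L(-u)\bigr)$, and (1.7) follows from (1.6) with the factor $\tfrac12$.
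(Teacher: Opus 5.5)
Your proof is correct and has the same architecture as the paper's proof. You compare the density $f=r^p/(1+\delta)$ with the constant density $1$ on the probability measure $\mu_K$, and you use the first Minkowski inequality in the form $\int r\,d\mu_K\ge1$. You then get (1.5)--(1.7) from evenness of $\mu_K$, the triangle inequality through $1+\delta$, and the mean value theorem.

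The one real difference is the H\"older-stability lemma:
\begin{itemize}
\item The paper uses the Leng--Lu inequality in \emph{linear} form, $1-\int f^{1/p}\,d\mu_K\ge\frac1{2pq}\|f-1\|_{L^1(\mu_K)}^2$, proved in the paper from a strengthened Young inequality and Cauchy--Schwarz. It pairs this with $1-\int f^{1/p}\,d\mu_K\le1-(1+\delta)^{-1/p}$.
\item You use Gilardoni's Pinsker bound for R\'enyi divergences, which is the \emph{logarithmic} form $-\log\int f^{1/p}\,d\mu_K\ge\frac1{2pq}\|f-1\|_1^2$. You pair it with $\frac1p\log(1+\delta)$.
\end{itemize}

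Since $-\log x\ge1-x$, the Leng--Lu form implies yours. So your exponential bound $\int f^t\le\exp\bigl(-\tfrac{t(1-t)}2\|f-1\|_1^2\bigr)$ is a consequence of the Leng--Lu theorem, not its literal content. The paper's intermediate estimate $\|f-1\|_1^2\le2pq\bigl(1-(1+\delta)^{-1/p}\bigr)$ is also slightly sharper than your $\|f-1\|_1^2\le2q\log(1+\delta)$. However, both relax to $2q\delta$, and the constant $t(1-t)/2$ with $t=1/p$ is the same in both forms. Nothing is lost in (1.4)--(1.7).

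What each route buys: yours identifies (1.4) as a standard Pinsker-type inequality and opens the door to information-theoretic tools. Your fallback sketch is sound: coarse-graining to $\{f\ge1\}$, $\{f<1\}$ preserves the $L^1$ distance and can only increase $\int f^{1/p}\,d\mu_K$. The cost is reliance on an external theorem. The paper's route is self-contained.
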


Theorem \ref{thm:geom} says that, when the $L_p$ mixed volume inequality is close to equality, the probability measure induced by $h_L^p h_K^{1-p}\,dS_K$ is close to the cone-volume measure of $K$. If $K$ is centrally symmetric, this further implies that the support function of $L$ is close to an even function in the average sense with respect to $\mu_K$.

\section{Proofs of the Theorems}

This section proves Theorems \ref{thm:main} and \ref{thm:geom}. All independent ingredients needed in the proofs are stated as lemmas. We first give the H\"older stability tool used in this note.

\begin{lemma}[Leng--Lu inequality]\label{lem:lenglu}
Let $(X,\mathcal A,\mu)$ be a measure space. Let $p>1$, $q>1$, and
\[
        \frac1p+\frac1q=1.
\]
If $f,g\ge0$ and
\[
        \int_X f\,d\mu=\int_X g\,d\mu=1,
\]
then
\[
1-\int_X f^{1/p}g^{1/q}\,d\mu
\ge
\frac1{2pq}
\left(\int_X |f-g|\,d\mu\right)^2.       \tag{2.1}
\]
The constant $1/(2pq)$ is best possible.
\end{lemma}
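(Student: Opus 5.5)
The plan is to reduce (2.1) to a pointwise two-variable inequality and then apply Cauchy--Schwarz. Since $\int f\,d\mu=\int g\,d\mu=1$ and $1/p+1/q=1$, we can write
\[
1-\int_X f^{1/p}g^{1/q}\,d\mu=\int_X \Bigl(\frac fp+\frac gq-f^{1/p}g^{1/q}\Bigr)\,d\mu ,
\]
and the integrand is nonnegative by the weighted AM--GM inequality. It therefore suffices to bound the AM--GM defect $\varphi(a,b)=a/p+b/q-a^{1/p}b^{1/q}$ from below by a quantity whose integral dominates $(\int|f-g|)^2/(2pq)$.

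The key pointwise claim I would aim for is
\[
\varphi(a,b)\ \ge\ \frac{(a-b)^2}{2pq\,w(a,b)},\qquad w(a,b)=\frac{a+b}{2},\qquad a,b\ge0,\ a+b>0 .
\]
Given this, Cauchy--Schwarz yields
\[
\Bigl(\int_X|f-g|\,d\mu\Bigr)^2\le \int_X\frac{(f-g)^2}{w(f,g)}\,d\mu\cdot\int_X w(f,g)\,d\mu .
\]
The first factor is at most $2pq\int\varphi(f,g)\,d\mu$, and the second equals $\tfrac12(1+1)=1$. The set where $f=g=0$ contributes nothing to either side. Together these give (2.1).

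To prove the pointwise claim, use homogeneity to set $b=1$ and $a=t\ge0$. The claim becomes
\[
\Phi(t):=pq\,(t+1)\Bigl(\frac tp+\frac1q-t^{1/p}\Bigr)-(t-1)^2\ \ge\ 0 .
\]
The boundary behaviour is consistent with this. At $t=1$ both $\Phi$ and $\Phi'$ vanish, and the second-order coefficients agree, since $\varphi(t,1)\approx (t-1)^2/(2pq)$. At $t=0$ we get $\Phi(0)=p-1\ge0$. As $t\to\infty$ the leading coefficient of $\Phi$ is $q-1>0$. This step is the main obstacle. I would first substitute $t=x^{p}$ to remove the fractional power, and then show that $\Phi$ has a double root at $t=1$ and no other sign change. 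The tool would be a convexity or derivative-counting argument, for instance showing that $\Phi''$ changes sign at most once, using $pq=p+q$. If the weight $(a+b)/2$ turns out to be too weak for some range of $p$, the fallback is a weight $\alpha a+\beta b$ with $\alpha+\beta=1$. The endpoint asymptotics only force $\alpha\ge 1/(2q)$ and $\beta\ge1/(2p)$, so there is some freedom, for example $w=a/q+b/p$ or $w=a/p+b/q$.

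For sharpness, take a two-point space with counting measure, $f=(\tfrac12+\eps,\tfrac12-\eps)$ and $g=(\tfrac12-\eps,\tfrac12+\eps)$. A second-order Taylor expansion of $\varphi$ around $(\tfrac12,\tfrac12)$ gives $1-\int f^{1/p}g^{1/q}=\tfrac{8\eps^2}{2pq}+o(\eps^2)$, since $\varphi(a,b)\approx (a-b)^2/(2pq\cdot\tfrac{a+b}{2})$. On the other hand, $(\int|f-g|)^2=16\eps^2$. The ratio of the two therefore tends to $1/(2pq)$ as $\eps\to0$, so no larger constant is possible.
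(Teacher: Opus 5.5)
Your overall architecture matches the paper's: write the deficit as $\int\varphi(f,g)\,d\mu$, bound $\varphi(a,b)\ge (a-b)^2/(2pq\,w(a,b))$ with a linear weight satisfying $\int w(f,g)\,d\mu=1$, and finish with Cauchy--Schwarz. The gap is the key pointwise claim. With $w=(a+b)/2$ it is false for every $p\neq2$.

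Using
\[
\varphi(1+h,1)=\frac1{pq}\Bigl[\frac{h^2}{2}-\frac{2-1/p}{6}\,h^3+O(h^4)\Bigr],
\]
you get
\[
\Phi(1+h)=\frac{2/p-1}{6}\,h^3+O(h^4).
\]
So $\Phi$ changes sign at $t=1$. For instance, with $p=4$ and $t=3/2$ one finds $\Phi(3/2)\approx-0.0058$.

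Your local check stopped at second order. For any weight $At+B$ with $A+B=1$, the identities $\Phi(1)=\Phi'(1)=\Phi''(1)=0$ hold automatically. Nonnegativity then forces $\Phi'''(1)=0$ as well, and this pins the weight down uniquely:
\[
A=\frac{2-1/p}{3},\qquad B=\frac{1+1/p}{3}.
\]
So there is no freedom here. The endpoint conditions $A\ge1/(2q)$ and $B\ge1/(2p)$ are not the binding ones. Both fallbacks you list, $a/q+b/p$ and $a/p+b/q$, give $\Phi'''(1)\neq0$ unless $p=2$. Equivalently, $w$ must be the tangent plane, along the diagonal, of the optimal weight $(a-b)^2/(2pq\,\varphi(a,b))$.

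This is exactly the weight the paper uses, and with it no sign-change counting is needed. Set $F(x)=2pq(Ax+B)\bigl(\frac xp+\frac1q-x^{1/p}\bigr)-(x-1)^2$. Then
\[
F''(x)=\frac{2(p+1)}{3p(p-1)}\,x^{1/p-2}\bigl(px^{2-1/p}-(2p-1)x+(p-1)\bigr)\ge0
\]
by weighted AM--GM, i.e.\ convexity of $x^{2-1/p}$ lying above its tangent at $x=1$. Hence $F$ is convex with $F(1)=F'(1)=0$, so $F\ge0$.

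There is also a small slip in your sharpness computation. For your two-point example the deficit is $8\eps^2/(pq)=16\eps^2/(2pq)$, not $8\eps^2/(2pq)$. With your value the ratio would tend to $1/(4pq)$. The example itself is fine once this is corrected.
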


\begin{proof}
We give the proof for completeness. First, we establish a strengthened form of Young's inequality. Set
\[
        A=\frac{2-1/p}{3},\qquad B=\frac{1+1/p}{3}.
\]
Then $A+B=1$, and for all $a,b>0$,
\[
\frac ap+\frac bq-a^{1/p}b^{1/q}
\ge
\frac1{2pq}\frac{(a-b)^2}{Aa+Bb}.        \tag{2.2}
\]
By homogeneity, it is enough to take $b=1$ and write $a=x$. Thus it remains to prove
\[
F(x):=2pq(Ax+B)\left(\frac xp+\frac1q-x^{1/p}\right)-(x-1)^2\ge0.
\]
A direct computation gives $F(1)=F'(1)=0$ and
\[
F''(x)=\frac{2(p+1)}{3p(p-1)}x^{1/p-2}
\left(px^{2-1/p}-(2p-1)x+(p-1)\right).
\]
By the weighted AM--GM inequality,
\[
\frac{p}{2p-1}x^{2-1/p}+\frac{p-1}{2p-1}\ge x,
\]
so the expression in parentheses is nonnegative. Hence $F$ is convex, and since $x=1$ is a stationary point, $F(x)\ge0$. This proves (2.2).

Applying (2.2) pointwise, we have
\[
\frac fp+\frac gq-f^{1/p}g^{1/q}
\ge
\frac1{2pq}\frac{(f-g)^2}{Af+Bg},
\]
where the right side is interpreted as $0$ on the set $\{f=g=0\}$. Integration gives
\[
1-\int f^{1/p}g^{1/q}\,d\mu
\ge
\frac1{2pq}\int\frac{(f-g)^2}{Af+Bg}\,d\mu.
\]
By Cauchy's inequality,
\[
\int\frac{(f-g)^2}{Af+Bg}\,d\mu
\ge
\frac{\left(\int |f-g|\,d\mu\right)^2}{\int(Af+Bg)\,d\mu}
=
\left(\int |f-g|\,d\mu\right)^2.
\]
This proves (2.1). The sharpness of the constant follows from the two-point distributions
\[
 f=\left(\frac12,\frac12\right),\qquad
 g=\left(\frac{1+\eps}{2},\frac{1-\eps}{2}\right)
\]
by letting $\eps\to0$. Indeed, the left side is
\[
1-\frac12(1+\eps)^{1/q}-\frac12(1-\eps)^{1/q}
=\frac1{2pq}\eps^2+O(\eps^3),
\]
whereas $\|f-g\|_1^2=\eps^2$.
\end{proof}

\begin{lemma}[Density stability for the H\"older step]\label{lem:holderpart}
Let $s>1$ and $Q=2s/(s-1)$. If $U\ge0$, $\int_{\Rd}U^s=1$, and $\psi\in L^Q(\Rd)$ with $\psi\not\equiv0$, then
\[
\left(\int_{\Rd}|\psi|^Q\,dx\right)^{2/Q}-\int_{\Rd}U\psi^2\,dx
\ge
\frac{\norm[Q]{\psi}^2}{sQ}
\left(
\int_{\Rd}
\left|U^s-\frac{|\psi|^Q}{\norm[Q]{\psi}^Q}\right|dx
\right)^2.      \tag{2.3}
\]
\end{lemma}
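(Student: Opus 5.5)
The plan is to apply Lemma~\ref{lem:lenglu} on $X=\Rd$ with Lebesgue measure, using exponents $p=s$ and $q=s'=s/(s-1)$, and probability densities
\[
f=U^s,\qquad g=\frac{|\psi|^Q}{\norm[Q]{\psi}^Q}.
\]
Both are nonnegative with $\int f=\int g=1$. This matches the right side of (2.3): $\int|f-g|\,dx$ is exactly the $L^1$ distance appearing there.

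The first step is to compute the H\"older product $f^{1/p}g^{1/q}$. We have $f^{1/s}=U$. Since $Q(s-1)/s=2$, we also have
\[
g^{1/q}=\frac{|\psi|^{Q(s-1)/s}}{\norm[Q]{\psi}^{Q(s-1)/s}}=\frac{\psi^2}{\norm[Q]{\psi}^2}.
\]
Hence
\[
\int f^{1/p}g^{1/q}\,dx=\frac{\int U\psi^2\,dx}{\norm[Q]{\psi}^2}.
\]
Substituting this into (2.1) gives
\[
1-\frac{\int U\psi^2}{\norm[Q]{\psi}^2}\ge \frac{1}{2ss'}\Bigl(\int|f-g|\Bigr)^2.
\]

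The second step is to multiply through by $\norm[Q]{\psi}^2=(\int|\psi|^Q)^{2/Q}$. The left side then becomes exactly the left side of (2.3). The remaining check is the constant: we need $2ss'=sQ$. Indeed $s'=s/(s-1)$ and $Q=2s/(s-1)=2s'$, so $2ss'=sQ$, and the constant $1/(2pq)$ turns into $1/(sQ)$.

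There is no genuine obstacle here. The only points needing care are the exponent identity $Q/s'=2$ and the constant bookkeeping $2ss'=sQ$. We should also note that $\psi\in L^Q$ and $U\in L^s$ make $U\psi^2$ integrable by H\"older, so every quantity in (2.3) is finite.
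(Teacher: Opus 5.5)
Your proposal is correct and matches the paper's proof. The paper applies the Leng--Lu inequality to the same densities $f=U^s$ and $g=|\psi|^Q/\norm[Q]{\psi}^Q$ with the conjugate exponents $s$ and $Q/2=s'$, and then multiplies by $\norm[Q]{\psi}^2$; your checks $Q(s-1)/s=2$ and $2ss'=sQ$ are exactly the bookkeeping the paper leaves implicit.
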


\begin{proof}
Set
\[
        f=U^s,
        \qquad
        g=\frac{|\psi|^Q}{\norm[Q]{\psi}^Q}.
\]
Then $f$ and $g$ are probability densities. Apply Lemma \ref{lem:lenglu} with the conjugate exponents $s$ and $Q/2$, since
\[
        \frac1s+\frac2Q=1.
\]
We obtain
\[
1-\int_{\Rd}U\frac{\psi^2}{\norm[Q]{\psi}^2}\,dx
\ge
\frac1{sQ}
\left(
\int_{\Rd}
\left|U^s-\frac{|\psi|^Q}{\norm[Q]{\psi}^Q}\right|dx
\right)^2.
\]
Multiplying by $\norm[Q]{\psi}^2$ gives the assertion.
\end{proof}

The next lemma rewrites the GNS stability theorem of Carlen--Frank--Lieb in a density-distance form. Let
\[
E[\psi]=\int_{\Rd}|\nabla\psi|^2\,dx-
\left(\int_{\Rd}|\psi|^Q\,dx\right)^{2/Q},
\]
and let $\calG$ be the family of optimizers for the normalized GNS minimization problem. Carlen--Frank--Lieb proved that there exists a constant $a_{Q,d}>0$ such that, for $\norm[2]{\psi}=1$,
\[
E[\psi]+C_{\gamma,d}
\ge
 a_{Q,d}\inf_{\phi\in\calG}\|\psi-\phi\|_{H^1}^2.       \tag{2.4}
\]
This is the core GNS stability input used below from \cite{CFL}.

\begin{lemma}[A density consequence of GNS stability]\label{lem:gns-density}
Under the same assumptions as in \cite{CFL}, there is a constant $b_{Q,d}>0$ such that, for every $\psi\in H^1(\Rd)$ with $\norm[2]{\psi}=1$,
\[
E[\psi]+C_{\gamma,d}
\ge
 b_{Q,d}\inf_{\phi\in\calG}
\left(
\int_{\Rd}
\left|
\frac{|\psi|^Q}{\norm[Q]{\psi}^Q}
-
\frac{|\phi|^Q}{\norm[Q]{\phi}^Q}
\right|dx
\right)^2.      \tag{2.5}
\]
\end{lemma}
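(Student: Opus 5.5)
We deduce (2.5) from the $H^1$ stability estimate (2.4), by showing that the map $\psi\mapsto |\psi|^Q/\norm[Q]{\psi}^Q$ is locally Lipschitz from $H^1$ (restricted to $\norm[2]{\psi}=1$) into $L^1$. Since the $L^1$ distance between two probability densities never exceeds $2$, it suffices to treat two regimes. If $E[\psi]+C_{\gamma,d}\ge\eta$ for some fixed $\eta>0$, then (2.5) holds with $b_{Q,d}\le\eta/4$. So we may assume the deficit is small. By (2.4) this yields some $\phi\in\calG$ with $\|\psi-\phi\|_{H^1}^2\le 2a_{Q,d}^{-1}(E[\psi]+C_{\gamma,d})$, which is small.

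The key step is a pointwise estimate. For $Q>2$ (indeed $Q\ge 2$ always, since $s>1$) and real numbers $a,b$,
\[
\bigl||a|^Q-|b|^Q\bigr|\le Q\,|a-b|\,(|a|+|b|)^{Q-1}.
\]
Integrating and applying Hölder with exponents $Q$ and $Q/(Q-1)$ gives
\[
\int\bigl||\psi|^Q-|\phi|^Q\bigr|\,dx\le Q\,\norm[Q]{\psi-\phi}\,\bigl(\norm[Q]{\psi}+\norm[Q]{\phi}\bigr)^{Q-1}.
\]
Because $Q$ lies in the GNS range ($Q<2d/(d-2)$ for $d\ge3$, and any finite $Q$ for $d\le2$, under the CFL hypotheses), the Sobolev embedding gives $\norm[Q]{u}\le C\|u\|_{H^1}$.

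Next we control the norms. All optimizers in $\calG$ with $\norm[2]{\phi}=1$ have a common value $\norm[Q]{\phi}=N_0>0$: the optimizers are translates, and the dilation scale is fixed by the normalization in $\calG$. The $L^2$ constraint together with the optimization fixes this, and we will check it against CFL's normalization. If $\calG$ contains all dilations, the $H^1$ norm still forces $\|\phi\|_{H^1}$ to stay close to $\|\psi\|_{H^1}$, which keeps the bounds uniform. In the small-deficit regime, $\|\psi\|_{H^1}$ is bounded by $\|\phi\|_{H^1}+1$, and $\norm[Q]{\psi}\ge N_0/2$.

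Finally we normalize. Using
\[
\Bigl\|\frac{F}{\|F\|_1}-\frac{G}{\|G\|_1}\Bigr\|_1\le\frac{2\|F-G\|_1}{\|G\|_1},
\]
with $F=|\psi|^Q$ and $G=|\phi|^Q$, we obtain that the density distance is at most $C'\|\psi-\phi\|_{H^1}$. Squaring and combining with (2.4) gives (2.5) with $b_{Q,d}=\min\{\eta/4,\ a_{Q,d}/(2C'^2)\}$.

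The main obstacle is the uniformity of the constants. We must ensure that $\norm[Q]{\phi}$ is bounded above and below, and that $\|\phi\|_{H^1}$ is bounded, uniformly over near-optimal $\phi\in\calG$. This relies on the precise description of $\calG$ in CFL: uniqueness up to translations, with the scale fixed by $\norm[2]{\phi}=1$ and the variational problem. If dilations were free, we would instead use the fact that $E$ is not dilation invariant under the $L^2$ constraint, so only one scale is optimal.
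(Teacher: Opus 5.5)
Your proposal is correct and is essentially the paper's proof. It uses a near/far dichotomy: you split on the size of the deficit, the paper on $D(\psi)=\inf_{\phi\in\calG}\|\psi-\phi\|_{H^1}$, which is equivalent via (2.4). It uses the mean-value bound for $\bigl||a|^Q-|b|^Q\bigr|$ together with H\"older and the Sobolev embedding $H^1\hookrightarrow L^Q$ to make $\psi\mapsto|\psi|^Q/\|\psi\|_Q^Q$ locally Lipschitz into $L^1$. And in the far regime it uses the trivial bound $2$ on the $L^1$ distance between probability densities.

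The uniformity issue you flag is resolved exactly as you say at the end. The constraint $\|\psi\|_2=1$ breaks the dilation invariance of $E$, so all elements of $\calG$ share one $L^Q$ norm, which the paper simply asserts. Also, since your normalization inequality divides only by $\|\phi\|_Q^Q$, you do not need the lower bound on $\|\psi\|_Q$ that the paper uses.
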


\begin{proof}
Write
\[
        D(\psi)=\inf_{\phi\in\calG}\|\psi-\phi\|_{H^1},
        \qquad
        \rho_\psi=\frac{|\psi|^Q}{\norm[Q]{\psi}^Q}.
\]
The CFL GNS stability estimate gives
\[
        E[\psi]+C_{\gamma,d}\ge a_{Q,d}D(\psi)^2.       \tag{2.6}
\]
All functions in $\calG$ have the same $L^Q$ norm, say $m_Q>0$. By Sobolev embedding there is a constant $S_{Q,d}$ such that
\[
        \|u\|_Q\le S_{Q,d}\|u\|_{H^1}.
\]
Choose $\varepsilon>0$ so small that $S_{Q,d}\varepsilon\le m_Q/2$. If $D(\psi)<\varepsilon$ and $\phi\in\calG$ satisfies $\|\psi-\phi\|_{H^1}<\varepsilon$, then
\[
        \norm[Q]{\psi}\ge m_Q/2,
        \qquad
        \norm[Q]{\psi}\le m_Q+S_{Q,d}\varepsilon.
\]
Thus, in this neighborhood, $\norm[Q]{\psi}$ and $\norm[Q]{\phi}$ have uniform positive lower and upper bounds. Using
\[
        \bigl||a|^Q-|b|^Q\bigr|
        \le Q(|a|^{Q-1}+|b|^{Q-1})|a-b|
\]
and H\"older's inequality, we get
\[
        \left\||\psi|^Q-|\phi|^Q\right\|_1
        \le C_{Q,d}\|\psi-\phi\|_Q
        \le C_{Q,d}\|\psi-\phi\|_{H^1}.
\]
Since the denominators have a uniform positive lower bound and
\[
        |\norm[Q]{\psi}^Q-\norm[Q]{\phi}^Q|
        \le \left\||\psi|^Q-|\phi|^Q\right\|_1,
\]
it follows that
\[
        \|\rho_\psi-\rho_\phi\|_1
        \le C'_{Q,d}\|\psi-\phi\|_{H^1}.       \tag{2.7}
\]
Taking the infimum over $\phi\in\calG$, we obtain
\[
        \inf_{\phi\in\calG}\|\rho_\psi-\rho_\phi\|_1^2
        \le C''_{Q,d}D(\psi)^2 .              \tag{2.8}
\]
Combining this with (2.6) proves the lemma when $D(\psi)<\varepsilon$.

If $D(\psi)\ge\varepsilon$, then the $L^1$ distance between two probability densities is at most $2$, and hence
\[
        \inf_{\phi\in\calG}\|\rho_\psi-\rho_\phi\|_1^2\le 4
        \le \frac4{\varepsilon^2}D(\psi)^2.
\]
Combining this again with (2.6) and decreasing the constant if necessary, we obtain (2.5) for all $\psi$.
\end{proof}

\begin{lemma}[Density control of the CFL deficit]\label{lem:combine}
With the notation above, there is a constant $c_{Q,d}>0$ such that, for every $U\ge0$ with $\int U^s=1$ and every $\psi\in H^1(\Rd)$ with $\norm[2]{\psi}=1$,
\[
E[\psi]+H[\psi,U]+C_{\gamma,d}
\ge
c_{Q,d}
\inf_{\phi\in\calG}
\left(
\int_{\Rd}
\left|
U^s-\frac{|\phi|^Q}{\norm[Q]{\phi}^Q}
\right|dx
\right)^2,       \tag{2.9}
\]
where
\[
        H[\psi,U]=\norm[Q]{\psi}^2-\int_{\Rd}U\psi^2\,dx.
\]
\end{lemma}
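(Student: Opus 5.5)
The plan is to combine Lemma \ref{lem:holderpart} and Lemma \ref{lem:gns-density} through the triangle inequality for the $L^1$ distance between probability densities. Write $\rho_\psi=|\psi|^Q/\norm[Q]{\psi}^Q$. The left-hand side of (2.9) is the sum of two nonnegative deficits, $E[\psi]+C_{\gamma,d}$ and $H[\psi,U]$. By Lemma \ref{lem:gns-density}, the first one is at least $b_{Q,d}\inf_{\phi\in\calG}\|\rho_\psi-\rho_\phi\|_1^2$. By Lemma \ref{lem:holderpart}, the second one is at least $\norm[Q]{\psi}^2(sQ)^{-1}\|U^s-\rho_\psi\|_1^2$. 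For each $\phi\in\calG$ we have
\[
\|U^s-\rho_\phi\|_1^2\le 2\|U^s-\rho_\psi\|_1^2+2\|\rho_\psi-\rho_\phi\|_1^2 .
\]
Taking the infimum over $\phi$ then bounds the right side of (2.9) by a constant times the sum of the two deficits. The only missing ingredient is a lower bound on $\norm[Q]{\psi}^2$, which is needed to turn the Hölder-step bound into a bound on $\|U^s-\rho_\psi\|_1^2$ alone.

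The main obstacle is that $\norm[Q]{\psi}$ may be small. I would split into two cases according to the size of $\norm[Q]{\psi}$, using the same dichotomy as in the proof of Lemma \ref{lem:gns-density}. Let $m_Q>0$ be the common $L^Q$ norm of the elements of $\calG$, and fix $\eta=m_Q/2$.

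\emph{Case $\norm[Q]{\psi}\ge\eta$.} Here $H[\psi,U]\ge \eta^2(sQ)^{-1}\|U^s-\rho_\psi\|_1^2$, and the triangle-inequality argument above gives (2.9) with $c_{Q,d}=\tfrac12\min\{b_{Q,d},\eta^2/(sQ)\}$.

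\emph{Case $\norm[Q]{\psi}<\eta$.} The right side of (2.9) is at most $4c_{Q,d}$, since all densities involved are probability densities. So it suffices to show that $E[\psi]+C_{\gamma,d}$ is bounded below by a positive constant, using $H\ge0$, which is Hölder's inequality. By (2.4), $E[\psi]+C_{\gamma,d}\ge a_{Q,d}D(\psi)^2$. If $D(\psi)$ were small, say $D(\psi)<\varepsilon$ with $S_{Q,d}\varepsilon\le m_Q/2$, then by Sobolev embedding $\norm[Q]{\psi}\ge m_Q-S_{Q,d}\varepsilon\ge m_Q/2=\eta$, which contradicts the case assumption. Hence $D(\psi)\ge\varepsilon$, so $E[\psi]+C_{\gamma,d}\ge a_{Q,d}\varepsilon^2$. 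Choosing $c_{Q,d}\le a_{Q,d}\varepsilon^2/4$ settles this case.

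Finally, I would take $c_{Q,d}$ to be the minimum of the constants from the two cases. A small check is needed on the normalization: $\rho_\psi$ requires $\psi\not\equiv0$ in $L^Q$, which holds because $\norm[2]{\psi}=1$. Also, in the second case Lemma \ref{lem:holderpart} is not used at all, only the nonnegativity of $H$.
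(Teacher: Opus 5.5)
Your proposal is correct and is essentially the paper's proof: it uses the same ingredients, namely Lemma \ref{lem:holderpart} together with a lower bound $\norm[Q]{\psi}\ge m_Q/2$, Lemma \ref{lem:gns-density}, the triangle inequality, and the trivial bound $4$ on squared $L^1$ distances. The only difference is cosmetic: you split on the size of $\norm[Q]{\psi}$, while the paper splits on whether $\Delta<\eta$. These are contrapositives of each other, and your version makes the paper's unspecified threshold explicit as $\eta=a_{Q,d}\varepsilon^2$.
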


\begin{proof}
Put
\[
        A=U^s,
        \qquad
        B=\frac{|\psi|^Q}{\norm[Q]{\psi}^Q},
        \qquad
        C_\phi=\frac{|\phi|^Q}{\norm[Q]{\phi}^Q}.
\]
Let
\[
        \Delta=E[\psi]+H[\psi,U]+C_{\gamma,d}.
\]
Since $H[\psi,U]\ge0$ and $E[\psi]+C_{\gamma,d}\ge0$, we have $\Delta\ge0$.

Let $m_Q=\norm[Q]{\phi}$ for $\phi\in\calG$; this value is independent of $\phi$ and is positive. Choose $\eta>0$ so small that
\[
        E[\psi]+C_{\gamma,d}<\eta
\]
implies that the $H^1$ distance from $\psi$ to $\calG$ is sufficiently small, and hence
\[
        \norm[Q]{\psi}\ge m_Q/2.               \tag{2.10}
\]
Such an $\eta$ exists by the CFL GNS stability estimate. If $\Delta<\eta$, then $E[\psi]+C_{\gamma,d}\le\Delta<\eta$, so (2.10) holds. In this case Lemma \ref{lem:holderpart} gives
\[
        H[\psi,U]
        \ge \frac{m_Q^2}{4sQ}\|A-B\|_1^2.      \tag{2.11}
\]
At the same time, Lemma \ref{lem:gns-density} gives
\[
        E[\psi]+C_{\gamma,d}
        \ge b_{Q,d}\inf_{\phi\in\calG}\|B-C_\phi\|_1^2.      \tag{2.12}
\]
By the triangle inequality,
\[
        \|A-C_\phi\|_1^2
        \le 2\|A-B\|_1^2+2\|B-C_\phi\|_1^2.
\]
Taking the infimum over $\phi\in\calG$ and using (2.11), (2.12), we find that
\[
        \Delta
        \ge c\inf_{\phi\in\calG}\|A-C_\phi\|_1^2
\]
for some $c>0$ depending only on $Q$ and $d$.

If $\Delta\ge\eta$, then the squared $L^1$ distance on the right side of (2.9) is at most $4$. Thus, after replacing $c$ by $\min\{c,\eta/4\}$, the same estimate remains valid. This proves (2.9).
\end{proof}

\begin{proof}[Proof of Theorem \ref{thm:main}]
We use the variational reduction of \cite{CFL}, combined with Lemma \ref{lem:combine}. For clarity, first assume that $V\le0$ and $\int V_-^s=1$, and write $U=V_-$. The CFL reduction gives
\[
        -C_{\gamma,d}
        =\inf_{\norm[2]{\psi}=1,\,\int U^s=1}
        \bigl(E[\psi]+H[\psi,U]\bigr),
\]
and the optimal potentials are related to GNS optimizers by
\[
        W_-=\frac{|\phi|^{Q-2}}{\norm[Q]{\phi}^{Q-2}}.
\]
Since
\[
        s=\frac{Q}{Q-2},
\]
we have
\[
        W_-^s=\frac{|\phi|^Q}{\norm[Q]{\phi}^Q}.
\]
Thus Lemma \ref{lem:combine} shows that the CFL deficit controls the square of the $L^1$ distance from the normalized density $U^s$ to the density induced by an optimal potential.

For a general $V$, set
\[
        U=\frac{V_-}{\left(\int V_-^s\right)^{1/s}}.
\]
Using the homogeneity and scaling normalization in \cite{CFL}, and then returning to the original variables, we get
\[
C_{\gamma,d}-
\frac{|\lambda(V)|}{\left(\int V_-^s\right)^{1/\gamma}}
\ge
C_{\gamma,d}c_{\gamma,d}
\inf_{W\in\calM}\left(\int |P_V-P_W|\right)^2.
\]
This is equivalent to (1.3).
\end{proof}

We now prove the convex-geometric application.

\begin{lemma}[$L_p$ mixed volume normalization]\label{lem:geom-density}
Under the assumptions of Theorem \ref{thm:geom}, let
\[
        r(u)=\frac{h_L(u)}{h_K(u)},
        \qquad
        A=\int_{\Sn}r(u)^p\,d\mu_K(u).
\]
Then
\[
        A=\frac{V_p(K,L)}{|K|}=1+\delta_p(K,L).
\]
Moreover,
\[
\int_{\Sn}r(u)\,d\mu_K(u)=\frac{V_1(K,L)}{|K|}\ge1.
\]
\end{lemma}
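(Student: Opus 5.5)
The plan is to unwind the definitions of $\mu_K$ and $V_p(K,L)$ and then invoke the classical Minkowski inequality for the $L_1$ part. First, substitute the definition of the normalized cone-volume measure into $A$:
\[
A=\int_{\Sn}\frac{h_L(u)^p}{h_K(u)^p}\,\frac{h_K(u)}{n|K|}\,dS_K(u)
=\frac{1}{|K|}\cdot\frac1n\int_{\Sn}h_L(u)^p h_K(u)^{1-p}\,dS_K(u)=\frac{V_p(K,L)}{|K|}.
\]
This is legitimate because $h_K>0$ on $\Sn$: the origin is interior to $K$, so $h_K$ is bounded below by a positive constant and $r$ is a bounded continuous function. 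The identity $V_p(K,L)/|K|=1+\delta_p(K,L)$ is then just the definition of $\delta_p$, which was introduced under the standing hypothesis $|K|=|L|$. As a sanity check, taking $L=K$ gives $\int d\mu_K=\frac{1}{n|K|}\int h_K\,dS_K=1$, by the standard cone-volume formula $|K|=\frac1n\int_{\Sn}h_K\,dS_K$. So $\mu_K$ is indeed a probability measure, a fact that will be needed later in the proof of Theorem \ref{thm:geom}.

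For the second claim, I would carry out the same computation with $p=1$:
\[
\int_{\Sn}r\,d\mu_K=\frac{1}{n|K|}\int_{\Sn}h_L\,dS_K=\frac{V_1(K,L)}{|K|}.
\]
Here $V_1(K,L)=\frac1n\int h_L\,dS_K$ is the classical first mixed volume, which agrees with the paper's formula for $V_p$ at $p=1$. The inequality $V_1(K,L)\ge|K|$ then follows from Minkowski's first inequality $V_1(K,L)^n\ge|K|^{n-1}|L|$ (see \cite{Schneider}) together with $|K|=|L|$, which gives $V_1(K,L)\ge|K|$.

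No step is a real obstacle, since everything is bookkeeping. The only point needing care is to cite Minkowski's first inequality for general convex bodies rather than rederive it. One could alternatively obtain $\int r\,d\mu_K\ge 1$ from the $L_p$ Minkowski inequality, but that route is awkward because Jensen's inequality gives $\left(\int r\,d\mu_K\right)^p\le\int r^p\,d\mu_K$, which points the wrong way. The classical $p=1$ inequality is therefore the right tool.
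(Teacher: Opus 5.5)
Your proposal is correct and matches the paper's proof: you substitute the definition of $\mu_K$ to identify $A$ and $\int r\,d\mu_K$ with $V_p(K,L)/|K|$ and $V_1(K,L)/|K|$. You then apply Minkowski's first inequality with $|K|=|L|$, and your extra remarks (positivity of $h_K$, and $\mu_K$ being a probability measure) are harmless and accurate.
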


\begin{proof}
The first identity follows directly from the definitions:
\[
\int r^p\,d\mu_K
=
\frac1{n|K|}\int h_L^p h_K^{1-p}\,dS_K
=
\frac{V_p(K,L)}{|K|}.
\]
The second identity is similar. By the classical first Minkowski inequality and the assumption $|K|=|L|$,
\[
        V_1(K,L)\ge |K|^{(n-1)/n}|L|^{1/n}=|K|.
\]
Hence $\int r\,d\mu_K\ge1$.
\end{proof}

\begin{proof}[Proof of Theorem \ref{thm:geom}]
Let
\[
        A=1+\delta_p(K,L)=\int r^p\,d\mu_K.
\]
Define two probability densities with respect to the probability measure $\mu_K$ by
\[
        F=\frac{r^p}{A},\qquad G=1.
\]
Applying Lemma \ref{lem:lenglu} to $F$ and $G$ gives
\[
1-\int F^{1/p}\,d\mu_K
\ge
\frac1{2pq}\left(\int |F-1|\,d\mu_K\right)^2.
\]
Since $F^{1/p}=r/A^{1/p}$, Lemma \ref{lem:geom-density} yields
\[
1-\int F^{1/p}\,d\mu_K
=1-\frac{\int r\,d\mu_K}{A^{1/p}}
\le
1-A^{-1/p}.
\]
Also,
\[
        1-(1+\delta)^{-1/p}\le \frac{\delta}{p}
        \qquad(\delta\ge0).
\]
Therefore
\[
\int_{\Sn}\left|\frac{r^p}{A}-1\right|\,d\mu_K
\le
\sqrt{2q\,\delta_p(K,L)},
\]
which proves (1.4).

If $K=-K$, then $h_K(u)=h_K(-u)$, and both $S_K$ and $\mu_K$ are even measures. Hence, by (1.4) and the change of variables $u\mapsto -u$,
\[
\int\left|\frac{r(-u)^p}{A}-1\right|\,d\mu_K(u)
\le
\sqrt{2q\,\delta_p(K,L)}.
\]
The triangle inequality gives
\[
\int\left|\frac{r(u)^p}{A}-\frac{r(-u)^p}{A}\right|\,d\mu_K(u)
\le
2\sqrt{2q\,\delta_p(K,L)}.
\]
Multiplying by $A=1+\delta_p(K,L)$ and using $h_K(u)=h_K(-u)$, we obtain (1.5).

Assume now that
\[
        mB_2^n\subset K,L\subset MB_2^n.
\]
Then, for every $u\in\Sn$,
\[
        m\le h_K(u),h_L(u)\le M,
\]
and hence
\[
        \frac mM\le r(u),r(-u)\le \frac Mm.
\]
By the mean value theorem, for $a,b\in[m/M,M/m]$,
\[
        |a^p-b^p|
        \ge
        p\left(\frac mM\right)^{p-1}|a-b|.
\]
Consequently,
\[
|h_L(u)-h_L(-u)|
=h_K(u)|r(u)-r(-u)|
\le
\frac{M}{p}\left(\frac Mm\right)^{p-1}|r(u)^p-r(-u)^p|.
\]
Integrating this inequality and using (1.5), we obtain (1.6). Finally,
\[
        h_{L_0}(u)=\frac12\bigl(h_L(u)+h_L(-u)\bigr),
\]
so
\[
        |h_L(u)-h_{L_0}(u)|=\frac12|h_L(u)-h_L(-u)|.
\]
Thus (1.7) follows immediately from (1.6).
\end{proof}

\begin{remark}
The conclusion of Theorem \ref{thm:geom} is stated with respect to the cone-volume measure $\mu_K$. The concentric ball condition alone does not imply that $\mu_K$ is comparable with the ordinary spherical measure. If one assumes, in addition, that $K$ is sufficiently smooth and has curvature bounded above and below, then corresponding averaged estimates with respect to the spherical measure can also be obtained.
\end{remark}

\end{document}